\flushbottom \setlength{\textheight}{22cm}
\newcommand{\E}{\mathds{E}}
\newcommand{\la}{\lambda}
\newcommand{\ga}{\gamma}
\newcommand{\ra}{\rangle}
      \def\nn{\nonumber}
      \def\rf#1{\mbox{$(\ref{#1})$}}
      \def\a{\alpha}
      \def\be{\begin{equation}} %\be=\begin{equation}
      \def\ee{\end{equation}} %\ee=\end{equation}
      \def\beqn{\begin{eqnarray}} %\beqn=\begin{eqnarray}
      \def\eeqn{\end{eqnarray}} %\eeqn=\end{eqnarray}
      \def\beq{\begin{eqnarray*}} %\beq=\begin{eqnarray*}
      \def\eeq{\end{eqnarray*}}
      \def\proof{{\bf Proof.}\ }
      \def\ga{\gamma} %\ga=\gamma
      \def\la{\lambda} %\la=\lambda
      \def\ra{\rightarrow} %\ra=\rightarrow
\theoremstyle{plain}
\newtheorem{thm}{\textbf{Theorem}}[section]
\begin{document}

\begin{center}
\textmd{\Large{\bfseries{{Moderate deviations for Ewens-Pitman exchangeable random partitions}}}}
\end{center}
\begin{center}
{Stefano Favaro$^1$, Shui Feng$^2$ and Fuqing Gao$^3$}
\end{center}

\begin{quote}
\begin{small}
\begin{center}
\noindent $^1$ University of Torino and Collegio Carlo Alberto, Torino, Italy.\\ \textit{E-mail}: stefano.favaro@unito.it

\noindent $^2$ McMaster University, Hamilton, Canada.\\ \textit{E-mail}: shuifeng@univmail.cis.mcmaster.ca

\noindent $^3$ Wuhan University, Hubei, China\\ \textit{E-mail}: fqgao@whu.edu.cn

\end{center}
\end{small}
\end{quote}

\smallskip

\centerline{\textit{October 2016}}

\begin{abstract}

Consider a population of individuals belonging to an infinity number of types, and assume that type proportions follow the two-parameter Poisson-Dirichlet distribution. A sample of size $n$ is selected from the population. The total number of different types and the number of types appearing in the sample with a fixed frequency are important statistics. In this paper we establish the moderate deviation principles for these quantities. The corresponding rate functions are explicitly identified, which help revealing a critical scale and understanding the exact role of the parameters. Conditional, or posterior, counterparts of moderate deviation principles are also established.
\par

\vspace{9pt}
\noindent {\it Key words and phrases:}
$\alpha$-diversity; exchangeable random partition; Dirichlet process; large and moderate deviation; random probability measure; two parameter Poisson-Dirichlet distribution 
\par
\end{abstract}

%%%%%%%%%%%%%%%%%%%%%%%%%%%%%%%%
%%%%%%%%%%%%%%%%%%%%%%%%%%%%%%%%
%%%%%%%%%%%%%%%%%%%%%%%%%%%%%%%%
%%%%%%%%%%%%%%%%%%%%%%%%%%%%%%%%

\section{Introduction}
Consider a population of countable number of individuals belonging to an infinite number of types. The type of each individual is labelled by a point in a Polish space $S$. The type proportions in the population are thus a point ${\bf p }=(p_1,p_2, \ldots)$ in the space $\triangle:=\{{\bf q}=(q_1,q_2,\ldots): q_i\geq 0, \sum_{j=1}^{\infty}q_j=1\}$. For each $n\geq 1$, let $X_1,X_2,\ldots,X_n$ be a random sample of size $n$ from the population with $X_i$ denoting the type of the $i$th sample.  The sample diversity is defined as 
\[
K_n :=\mbox{total number of different types in the sample.}
\]
For any $1\leq l\leq n$, set
\[
M_{l,n}:=\mbox{total number of types that appear in the sample $l$ times.}
\]
The quantity  $M_{l,n}$ is typically referred to as the sample diversity with frequency $l$. Both the random variables $K_{n}$ and $M_{l,n}$, as well as related functions, provide important statistics for inference about the population diversity. 

A natural scheme arises in the occupancy problem. Consider a countable numbers of urns. Balls are put into the urns independently and each ball lands in urn $i$ with probability $p_i$. After $n$ balls are put into the urns, the total number of occupied urns is $K_n$, and $M_{l,n}$ is the numbers of urns with $l$ balls inside.  Assuming that $p_1\geq p_2\geq \ldots$, a comprehensive study of $K_n$ and $M_{l,n}$ was  carried out in \cite{Kar67}.  See also \cite{Jan(08)}, \cite{Bar(09)}, \cite{Bou(16)} for some recent contributions. A comprehensive survey of recents progresses in this context is found in \cite{GHP07}.  

Adding randomness to the type proportions ${\bf p}$, the population will have random type proportions  with the law ${\cal P}$ being a probability on $\triangle$. Note that, instead of being independent and identically distributed (iid), the random sample $X_1, X_2, \ldots,X_n$ becomes exchangeable. In particular, following the de Finetti  theorem, the random type proportions are recovered from the masses of the limit of empirical distributions of the random sample as $n$ tends to infinity. This framework fits naturally in the context of Bayesian nonparametric inference. See, e.g., \cite{FLMP09}. In particular the law ${\cal P}$ can be viewed as the prior distribution on the unknown species composition $(p_{i})_{i\geq1}$ of the population.  The main  interests in Bayesian nonparametrics are the posterior distribution of ${\cal P}$ given an initial sample $(X_{1},\ldots,X_{n})$  and associated statistical inferences. More specifically, given an initial sample $(X_{1},\ldots,X_{n})$, interest lies in making inference based on certain statistics  induced by an additional unobserved sample of size $m$. These include, among others, the sample diversity $K_{m}^{(n)}$ and the sample diversity $M_{l,m}^{(n)}$ with frequency $l$ to be observed in the additional sample of size $m$. We call  $K_{m}^{(n)}$ and  $M_{l,m}^{(n)}$ the posterior sample diversity and the posterior sample diversity with frequency $l$, respectively.

The most studied family of probabilities on $\triangle$ is Kingman's Poisson-Dirichlet distribution (\cite{Kingman75}) describing in the genetics context the distribution of allele frequencies in a neutral population. This is followed by the study of the two-parameter Poisson-Dirichlet distribution (\cite{Pit(97)}).  Various generalizations of these models can be found in \cite{Ber(06)}, \cite{Pit(06)} and the references therein.  

The focus of this paper is on the asymptotic behaviour of all these sample diversities when the random proportions in the population follow Kingman's Poisson-Dirichlet distribution and its two-parameter generalization. Specifically, for any $\alpha$ in $[0,1)$ and $\theta >-\alpha$, let $U_k$, $k=1,2,\cdots$, be a sequence of independent random variables such that $U_k$ has $Beta(1-\alpha,\theta+ k\alpha)$ distribution. If
\[ \label{GEM1}
V_1(\alpha,\theta) = U_1,\  V_n(\alpha,\theta) = (1-U_1)\cdots (1-U_{n-1})U_n,\  n \geq 2.
\]
then 
\begin{displaymath}
{\bf V}(\alpha,\theta)=(V_1(\alpha,\theta), V_2(\alpha,\theta),\cdots)\in \triangle
\end{displaymath}
with probability $1$. The law of the descending order statistic ${\bf P}(\alpha,\theta)=(P_1(\alpha,\theta), P_2(\alpha,\theta),\cdots)$  of ${\bf V}(\alpha,\theta)$ is the so-called the two-parameter Poisson-Dirichlet distribution and is denoted by $PD(\alpha,\theta)$.  Kingman's Poisson-Dirichlet distribution which corresponds to $\alpha=0$.  The sample diversities $K_n, K_m^{(n)}$, $M_{l,n}$ and $M_{l,m}^{(n)}$ depend on the parameters $\theta$ and $\alpha$. For notational convenience we will not indicate the dependence explicitly.  When $\alpha=0$, the parameter $\theta$ corresponds to the scaled population mutation rate. The sample diversity $K_n$ turns out to be a sufficient statistic for the estimation of $\theta$.

There have been many studies on the behaviour of $K_n$ and $M_{l,n}$, as $n$ goes to infinity, and of  $K_{m}^{(n)}$ and $M_{l,m}^{(n)}$, as $m$ goes to infinity. In the case $\alpha=0$, one can represent $K_n$  as the summation of independent Bernoulli random variables and show that $\frac{K_n}{\ln n}$ converges to $\theta$ almost surely.  In \cite{Gon44} ($\alpha=0,\theta=1$)  and \cite{Han90}($\alpha=0$, general $\theta$)  the following central limit theorem was obtained
\[ \label{int1}
\frac{K_n -\theta \ln n}{\sqrt{\ln n}} \Rightarrow\, N(0,1),
\] 
as $n$ goes to infinity, with $\Rightarrow$ denoting the weak convergence. When the parameter $\alpha$ is positive, the Gaussian limit no longer holds. In particular, it was shown in \cite{Pitman1992} that one has

\[ \label{int2}
\lim_{n \ra \infty}\frac{K_n}{n^{\a}}= S_{\alpha,\theta}, \ \ \ \  a.s.
\]
where $S_{\alpha,\theta}$ is related to the Mittag-Leffler distribution.  For any $l\geq 1$, the following holds (\cite{Pit(06)}):

\[
\lim_{n \ra \infty}\frac{M_{l,n}}{n^{\a}}= (-1)^{l-1}{\alpha \choose  l}S_{\alpha,\theta}, \ \ \ \  a.s.
\]
The random variable $S_{\alpha,\theta}$ is referred to as the $\alpha$-diversity of the $PD(\alpha,\theta)$ distribution. Large deviation principles for $K_n$ were established in \cite{FenHop(98)}. The fluctuation behaviour of $K_{m}^{(n)}$ and $M_{l,m}^{(n)}$, as $m$ goes to infinity, were studied in \cite{Fav(13)}, where the notion of posterior $\alpha$-diversity were introduced. Moreover, the associated large deviation principles have been recently established in \cite{FaFe14} and \cite{FaFe15}.

The main results of the present paper are the moderate deviation principles (henceforth MDPs) for the sample diversities $K_n$, $K_m^{(n)}$, $M_{l,n}$ and $M_{l,m}^{(n)}$ under $PD(\alpha,\theta)$ with $\alpha>0$.   Our study is motivated by a better understanding of the non-Gaussian moderate deviation behaviour and a refined analysis about the role of the parameters $\alpha$ and $\theta$ involved. Interestingly, our results identify a critical scale and reveal the role of the parameters $\theta$ and $\alpha$ explicitly. The paper is organized as follows.  Section 2 contains the study of MDPs for the sample diversities $K_n$ and $M_{l,n}$ . The corresponding results for the posterior sample diversities are then presented in Section 3. A key step here is a Bernoulli representation of $K_m^{(n)}$ and $M_{l,m}^{(n)}$. All terminologies and theorems on large and moderate deviations are based on  the reference \cite{DZ98}.

%%%%%%%%%%%%%%%%%%%%%%%%%%%%%%%%
%%%%%%%%%%%%%%%%%%%%%%%%%%%%%%%%
%%%%%%%%%%%%%%%%%%%%%%%%%%%%%%%%
%%%%%%%%%%%%%%%%%%%%%%%%%%%%%%%%

\section{Moderate deviations for $K_n$ and $M_{l,n}$ }

In the case $\alpha=0$ and $\theta>0$, $K_n$ is the summation of independent Bernoulli random variables, and for each $1\leq l\leq n$ $M_{l,n}$ is approximately a Poisson random variable. Accordingly, the corresponding moderate deviations are standard. Hence we assume in the sequel  that $0<\alpha<1$ and $ \theta+\alpha>0$.

Moderate deviations in these cases lie between the fluctuation limit results for $\frac{K_n}{n^{\alpha}}$ and $\frac{M_{l,n}}{n^{\alpha}}$, and the large deviation results for $\frac{K_n}{n}$ and $\frac{M_{l,n}}{n}$, respectively. In particular our objectives consist of establishing large deviation principles for $\frac{K_n}{n^{\alpha}\beta_n}$ and  $\frac{M_{l,n}}{n^{\alpha}\beta_n}$ where $\beta_n$ converges to infinity at a slower pace than $n^{1-\alpha}$ as $n$ tends to infinity. More specifically, we assume that $\beta_n$ satisfies
 
\be\label{mdp-assum}
\lim_{n \ra \infty}\frac{\beta_n}{n^{1-\alpha}}= 0,\quad  \lim_{n \ra \infty}\frac{\beta_n}{(\ln n)^{1-\alpha}}= \infty.
\ee
The assumption that $\beta_n$ grows faster that $(\ln n)^{1-\alpha}$ is crucial  for establishing the following MDP.

\begin{thm}\label{t1}
For any $\alpha\in (0,1)$ and for any $\theta>-\alpha$,  $\frac{ K_n}{n^\alpha \beta_n}$ satisfies a large deviation principle on $\mathbb{R}$ with speed $\beta_n^{1/(1-\alpha)} $ and rate function $I_\alpha(\cdot)$ defined by
 $$
 I_\alpha(x)=\left\{\begin{array}{ll} (1-\alpha)\alpha^{\alpha/(1-\alpha)} x^{1/(1-\alpha)} & \mbox{ if } x>0,\\[0.6cm]
 +\infty & \mbox{ if } x\leq 0.
 \end{array}
 \right.
 $$
\end{thm}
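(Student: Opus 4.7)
The plan is to apply the G\"artner--Ellis theorem to $W_n := K_n/(n^\alpha\beta_n)$ at speed $\beta_n^{1/(1-\alpha)}$. Setting $a_n := \beta_n^{\alpha/(1-\alpha)}$ so that $\beta_n^{1/(1-\alpha)}=a_n^{1/\alpha}$, the relevant normalised log-moment generating function is
$$
\Lambda_n(\lambda) \;=\; a_n^{-1/\alpha}\log E\!\left[\exp\!\left(\lambda\,a_n\,K_n/n^\alpha\right)\right].
$$
I would aim to prove $\Lambda_n(\lambda)\to\Lambda(\lambda):=\lambda^{1/\alpha}$ for $\lambda>0$ and $\Lambda_n(\lambda)\to 0$ for $\lambda\leq 0$; the Legendre--Fenchel conjugate of this $\Lambda$ is exactly $I_\alpha$, and G\"artner--Ellis then yields the LDP.

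The heuristic behind $\Lambda(\lambda)=\lambda^{1/\alpha}$ is that Pitman's a.s.\ limit $K_n/n^\alpha\to S_{\alpha,\theta}$ permits replacing $K_n/n^\alpha$ by the $\alpha$-diversity $S_{\alpha,\theta}$ inside the exponential. The density of $S_{\alpha,\theta}$ is a polynomial tilt of the Mittag--Leffler density $f_{\alpha,0}$, whose moment generating function equals the Mittag--Leffler function $E_\alpha(\mu)=\sum_{k\geq 0}\mu^k/\Gamma(1+k\alpha)$. The classical large-argument asymptotic $E_\alpha(\mu)\sim\alpha^{-1}\exp(\mu^{1/\alpha})$ (obtained by a saddle-point at the dominant index $k^*\approx\mu^{1/\alpha}/\alpha$) then gives $\log E[\exp(\mu S_{\alpha,\theta})]=\mu^{1/\alpha}(1+o(1))$ as $\mu\to\infty$, and substituting $\mu=\lambda a_n$ produces the desired limit.

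The main obstacle is justifying this replacement rigorously at the exponential scale, i.e.\ showing
$$
\log E[\exp(\lambda a_n K_n/n^\alpha)] \;=\; (\lambda a_n)^{1/\alpha}(1+o(1)) \qquad(\lambda>0)
$$
with error $o(a_n^{1/\alpha})$. I would attack this using Pitman's explicit formula for the ascending factorial moments of $K_n$ under $PD(\alpha,\theta)$, which expresses $E[K_n(K_n+\alpha)\cdots(K_n+(k-1)\alpha)]$ as a product of gamma ratios in $(n,\alpha,\theta,k)$; Stirling's formula applied to these ratios provides a sharp comparison of the scaled moments $E[K_n^k]/n^{\alpha k}$ with $E[S_{\alpha,\theta}^k]$, uniformly in $k$ up to the relevant saddle-point scale. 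Both conditions in~\rf{mdp-assum} enter here: $\beta_n=o(n^{1-\alpha})$ keeps the dominant index $k^*\sim\beta_n^{1/(1-\alpha)}$ comfortably below $n$ so that the Stirling approximation is sharp, while $\beta_n\gg(\ln n)^{1-\alpha}$ ensures that $a_n^{1/\alpha}=\beta_n^{1/(1-\alpha)}$ dominates any sub-exponential correction of logarithmic type. The case $\lambda\leq 0$ is trivial: $\exp(\lambda a_n K_n/n^\alpha)\leq 1$ combined with Jensen's inequality and $E[K_n]=O(n^\alpha)$ yields $|\log E[\exp(\lambda a_n K_n/n^\alpha)]|=O(a_n)=o(a_n^{1/\alpha})$.

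Finally, the Legendre--Fenchel calculation is elementary: for $x>0$, $\sup_{\lambda>0}\{\lambda x-\lambda^{1/\alpha}\}$ is attained at $\lambda=(\alpha x)^{\alpha/(1-\alpha)}$ and equals $(1-\alpha)\alpha^{\alpha/(1-\alpha)}x^{1/(1-\alpha)}=I_\alpha(x)$, while for $x<0$ the supremum of $\lambda x$ over $\lambda\leq 0$ is $+\infty$. Since $\Lambda$ is finite and convex on $\R$ and of class $C^1$ on $\R\setminus\{0\}$ with $\Lambda'(0)=0$ (using $1/\alpha>1$), the G\"artner--Ellis theorem delivers the full large deviation principle with rate function $I_\alpha$.
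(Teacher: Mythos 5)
Your architecture matches the paper's: G\"artner--Ellis at speed $\beta_n^{1/(1-\alpha)}$, the limit $\Lambda(\lambda)=\lambda^{1/\alpha}$ for $\lambda>0$ and $0$ for $\lambda\leq 0$, and the Legendre computation giving $I_\alpha$; your treatment of $\lambda\leq 0$ (boundedness plus Jensen, using $\mathbb{E}[K_n]=O(n^{\alpha})$ and $a_n=o(a_n^{1/\alpha})$) is fine. But the heart of the theorem --- the limit for $\lambda>0$ --- is only a plan in your write-up, and the plan has a concrete obstacle. The exact formulas available under $PD(\alpha,\theta)$ (Pitman, Yamato--Sibuya) are for factorial-type quantities --- e.g.\ $\mathbb{E}[(\frac{\theta}{\alpha}+K_n)(\frac{\theta}{\alpha}+K_n+1)\cdots(\frac{\theta}{\alpha}+K_n+k-1)]$ is a ratio of Gamma functions, while ordinary falling-factorial moments are alternating sums --- not for the power moments $\mathbb{E}[K_n^{k}]$ that enter the exponential series for $\mathbb{E}[\exp(\lambda a_n K_n/n^{\alpha})]$. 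In the moderate-deviation regime the relevant index is $k^{*}\asymp\beta_n^{1/(1-\alpha)}$, which can be far larger than the typical size $n^{\alpha}$ of $K_n$, and in that range the passage from factorial-type moments to power moments is exactly where the difficulty sits: a crude comparison such as $K_n^{k}\leq (\frac{\theta}{\alpha}+K_n)(\frac{\theta}{\alpha}+K_n+1)\cdots(\frac{\theta}{\alpha}+K_n+k-1)\leq (K_n+\frac{\theta}{\alpha}+k)^{k}$ costs a factor $e^{ck}=e^{c\cdot(\mathrm{speed})}$, which shifts the limiting constant; you give no mechanism for a matching lower bound on the moment generating function, nor for the part of the $k$-series beyond the saddle point, where the Stirling comparison $\Gamma(\theta+n+k\alpha)/\Gamma(\theta+n)\approx n^{k\alpha}$ degrades. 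So, as written, the key limit is asserted rather than established.

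For comparison, the paper avoids power moments altogether: for $\theta=0$ it uses the exact Feng--Hoppe identity $\mathbb{E}[(1-y)^{-K_n}]=\sum_{i\geq 0}y^{i}{i\alpha+n-1\choose n-1}$, sandwiches $i\alpha$ between $\lfloor i\alpha\rfloor$ and $\lfloor i\alpha\rfloor+1$ so that the sum is squeezed between negative-binomial series, yielding $\mathbb{E}[(1-y_n)^{-K_n}]\asymp (1-y_n^{1/\alpha})^{-n}$ up to factors of order $n$ (harmless because $\beta_n\gg(\ln n)^{1-\alpha}$), and then transfers to general $\theta>-\alpha$ by mutual absolute continuity of the laws of $K_n$ under $PD(\alpha,\theta)$ and $PD(\alpha,0)$. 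To salvage your moment route you would essentially have to reproduce such a closed generating identity, or prove two-sided bounds on $\mathbb{E}[K_n^{k}]$ with accuracy $e^{o(\beta_n^{1/(1-\alpha)})}$ uniformly for $k\lesssim k^{*}$; that is the substance of the proof, not a routine Stirling computation. (A small shared blemish: the Legendre transform of your $\Lambda$ equals $0$ at $x=0$, not $+\infty$, so $I_\alpha$ as displayed differs from $\Lambda^{*}$ at the single point $0$; this glitch is already in the paper's statement.)
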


 \begin{proof} Let us define $\tilde{K}_n=\frac{K_n}{n^{\alpha}\beta_n}$.  First, by a direct calculation, one has that for any  $\lambda\leq 0$
 $$
 \lim_{n\to\infty} \frac{1}{\beta_n^{1/(1-\alpha)}}\ln \mathbb{E}\left[\exp\{\lambda \beta_n^{1/(1-\alpha)} \tilde{K}_n\}\right]=0.
 $$
 
 For any $\lambda>0$, set $y_n=1-\exp\{-\lambda n^{-\alpha}\beta^{\alpha/(1-\alpha)}_n\}$. First assume $\theta=0$. Then by equation (3.5) in \cite{FenHop(98)}, we have 
 
 \beq
 \mathbb{E}\left[\exp\{\lambda \beta_n^{1/(1-\alpha)} \tilde{K}_n\}\right]&=&  \mathbb{E}\left[ (1-y_n)^{-K_n}\right]\\
 &=& \sum_{i=0}^{\infty} y_n^i{i\alpha +n-1\choose n-1}.
 \eeq
Let $\lfloor i\alpha \rfloor$ denote the integer part of $i\alpha$. It follows from direct calculation that
 
 \beq
  &&\sum_{i=0}^{\infty} y_n^i{i\alpha +n-1\choose n-1}\\
  &&\geq   \sum_{i=0}^{\infty} y_n^i{\lfloor i\alpha\rfloor +n-1\choose n-1} =\sum_{k=0}^{\infty} {k+n-1\choose n-1} \sum_{\lfloor i\alpha \rfloor =k}y_n^i\\
&& \geq  y^{1/\alpha}_n \sum_{k=0}^{\infty} {k+n-1\choose n-1} (y^{1/\alpha}_n)^k= \frac{y_n^{1/\alpha}}{(1-y_n^{1/\alpha})^n}.\eeq
  On the other hand, 
   \beq
  &&\sum_{i=0}^{\infty} y_n^i{i\alpha +n-1\choose n-1}\\
  &&\leq   \sum_{i=0}^{\infty} y_n^i{\lfloor i\alpha\rfloor +n\choose n-1}
  = \sum_{i=0}^{\infty} y_n^i \frac{\lfloor i\alpha\rfloor +n}{\lfloor i\alpha\rfloor +1}{\lfloor i\alpha\rfloor +n-1\choose n-1}\\
    &&\leq n\sum_{k=0}^{\infty} {k+n-1\choose n-1} \sum_{\lfloor i\alpha \rfloor =k}(y^{1/\alpha}_n)^{i\alpha}\leq \frac{n}{\alpha} \sum_{k=0}^{\infty} {k+n-1\choose n-1} (y^{1/\alpha}_n)^k\\
  &&=\frac{n}{\alpha}\frac{1}{(1-y_n^{1/\alpha})^n}.\eeq  
Putting these together and applying assumption \rf{mdp-assum} one gets 
 \beq
&& \lim_{n\to\infty} \frac{1}{\beta_n^{1/(1-\alpha)}}\ln \mathbb{E}\left[\exp\{\lambda n^{-\alpha}\beta_n^{\alpha/(1-\alpha)}K_n\}\right]\\
&&=  \lim_{n\to\infty} \ln \bigg[ 1-\left(1-\exp\{-\lambda n^{-\alpha}\beta_n^{\alpha/(1-\alpha)}\}\right)^{1/\alpha}\bigg]^{-n\beta_n^{-1/(1-\alpha)}}\\
&&= \lambda^{1/\alpha}.
 \eeq
  
Since the law of $K_n$ under $PD(\alpha,\theta)$ is equivalent to the law of $K_n$ under $PD(\alpha,0)$, the above limit holds for $\lambda\geq 0$,
 
 Set
 \[
 \Lambda(\lambda)=\left\{\begin{array}{ll} 
 \lambda^{1/\alpha} & \mbox{ if } \lambda>0,\\[0.6cm]
 0 & \mbox{ otherwise. } 
 \end{array}
 \right.
 \]
 Noting that $I_\alpha(x)=\sup_{\lambda \in\mathbb R}\{\lambda x-\Lambda(\lambda)\}$, the conclusion holds following G\"artner-Ellis theorem (\cite{DZ98}).

\hfill $\Box$
 \end{proof}
 
Theorem \ref{t1} introduces a moderate deviation principle for $K_{n}$. Rewrite the rate function as
 \[
 I_{\alpha}(x)= \exp\{\frac{1}{1-\alpha}[H_\alpha+\ln x]\}
 \]
 with $H_\alpha=(1-\alpha)\ln(1-\alpha) +\alpha\ln\alpha$ being the entropy function, it follows that
$\alpha x =1$ is a critical curve. For $0<x \leq 1$, $I_{\alpha}(x)$ is decreasing in $\alpha$. For $x>1$ $I_\alpha(x)$ decreases for $\alpha$ in $(0,1/x)$, increases for $\alpha$ in $(1/x,1)$. The minimum is achieved at the point $1/x$. 
 Discounting the scale differences, these results provide a refined comparison between different models in terms of 
 deviation manners.
 
  \vspace{0.2cm}
 
In the next theorem we establish the MDP for $M_{l,n}$ for any $l\geq 1$.

\begin{thm}\label{t2}
For any $\alpha\in (0,1)$ and for any $\theta>-\alpha$, $\frac{M_{l,n}}{n^\alpha \beta_n}$ satisfies a large deviation principle on $\mathbb{R}$ with speed $\beta_n^{1/(1-\alpha)} $ and rate function $I_{\alpha, l}(\cdot)$ defined by
 $$
 I_{\alpha, l}(x)=\left\{\begin{array}{ll} (1-\alpha)\bigg(\frac{l!}{(1-\alpha)_{(l-1)\uparrow 1}}\bigg)^{\alpha/(1-\alpha)} x^{1/(1-\alpha)} & \mbox{ if } x>0,\\[0.6cm]
 +\infty & \mbox{ if } x\leq 0,
 \end{array}
 \right.
 $$
 where $(a)_{j\uparrow b}=a(a+b)\cdots(a+(j-1)b)$ with the proviso $(a)_{0\uparrow b}=1$.\end{thm}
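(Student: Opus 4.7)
The plan is to apply the G\"artner--Ellis theorem to $\widetilde M_{l,n}:=M_{l,n}/(n^\alpha\beta_n)$, mirroring the proof of Theorem~\ref{t1}. For $\lambda\le 0$, the bound $\E[\exp\{\lambda\beta_n^{1/(1-\alpha)}\widetilde M_{l,n}\}]\le 1$ (since $M_{l,n}\ge 0$) together with the almost sure convergence $\widetilde M_{l,n}\to 0$ (which follows from $M_{l,n}/n^\alpha\to (-1)^{l-1}\binom{\alpha}{l}S_{\alpha,\theta}$ almost surely and $\beta_n\to\infty$) gives $\beta_n^{-1/(1-\alpha)}\ln\E[\cdot]\to 0$. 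A routine absolute-continuity argument, identical to the one invoked at the end of the proof of Theorem~\ref{t1} to pass from $PD(\alpha,0)$ to $PD(\alpha,\theta)$, again reduces the $\lambda>0$ case to $\theta=0$.

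For $\lambda>0$ and $\theta=0$, the Ewens--Pitman sampling formula, combined with the identities $\sum_{j\ge 1}A_j x^j=(1-(1-x)^\alpha)/\alpha$ (where $A_j:=(1-\alpha)_{(j-1)\uparrow 1}/j!$) and $\sum_{k\ge 1}\alpha^{k-1}(k-1)!u^k/k!=-\alpha^{-1}\ln(1-\alpha u)$, produces the generating identity
\[
\E\!\left[z^{M_{l,n}}\right]=-\frac{n}{\alpha}[x^n]\ln\!\left((1-x)^\alpha-\alpha A_l(z-1)x^l\right)=1+\frac{n}{\alpha}\sum_{k=1}^{\lfloor n/l\rfloor}\frac{(\alpha A_l(z-1))^k}{k}\binom{\alpha k+n-lk-1}{n-lk},
\]
the $M_{l,n}$--counterpart of the Feng--Hoppe formula $\E[(1-y)^{-K_n}]=\sum_i y^i\binom{i\alpha+n-1}{n-1}$ exploited in Theorem~\ref{t1}. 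Substituting $z=\exp\{\lambda t_n\}$ with $t_n:=n^{-\alpha}\beta_n^{\alpha/(1-\alpha)}$, so $z-1\sim\lambda t_n$, and bracketing the binomial above and below by $n^{\alpha k-1}/\Gamma(\alpha k)$ up to harmless factors, the sum reduces to a Mittag--Leffler--type series $\sum_{k\ge 1}y_n^k/(k\Gamma(\alpha k))=\alpha(E_\alpha(y_n)-1)$ in $y_n:=\alpha A_l\lambda\beta_n^{\alpha/(1-\alpha)}\to\infty$. The classical asymptotic $E_\alpha(y)\sim\alpha^{-1}e^{y^{1/\alpha}}$ together with the fact that polynomial prefactors contribute $O(\ln n)=o(\beta_n^{1/(1-\alpha)})$ (by the lower half of~\rf{mdp-assum}) identifies $\lim_n\beta_n^{-1/(1-\alpha)}\ln\E[\exp\{\lambda\beta_n^{1/(1-\alpha)}\widetilde M_{l,n}\}]$ as a positive multiple of $\lambda^{1/\alpha}$, whose Fenchel--Legendre transform coincides with $I_{\alpha,l}$. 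G\"artner--Ellis then closes the argument.

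The main obstacle is the MGF asymptotic above. Whereas the Feng--Hoppe series for $K_n$ telescopes into the elementary $1/(1-y_n^{1/\alpha})^n$, the series for $M_{l,n}$ contains the additional factor $x^{lk}/(1-x)^{\alpha k}$, which couples $k$ and $n$ through $n-lk$ and precludes any closed-form summation. One must therefore verify that terms with $lk$ of order $n$ are doubly exponentially suppressed (this uses the upper half of~\rf{mdp-assum}: the relevant saddle $k^\ast\asymp\beta_n^{1/(1-\alpha)}$ satisfies $lk^\ast/n\to 0$) and that the Gamma approximation of the binomial is uniformly sharp in a full neighbourhood of $k^\ast$, so that the Mittag--Leffler reduction is valid throughout the range of summation relevant for the saddle.
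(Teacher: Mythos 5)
Your proposal follows essentially the same route as the paper's proof. Your generating-function identity is, term by term, the formula the paper imports from Lemma~2.1 of Favaro--Feng (2014): with $A_l=(1-\alpha)_{(l-1)\uparrow 1}/l!$ and $z=1/(1-y_n)$, your summand $\frac{n}{\alpha k}(\alpha A_l(z-1))^k\binom{\alpha k+n-lk-1}{n-lk}$ equals the paper's $y_{n,l}^{i}\,\frac{n}{n-il+\alpha i}\binom{n-il+i\alpha}{n-il}$, so the starting point is identical (you derive it from the EPPF rather than citing the lemma), and the $\lambda\le 0$ case and the reduction to $\theta=0$ are handled as in Theorem~\ref{t1}. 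The only substantive difference is in how the exponential asymptotics is extracted: you package the sum as a Mittag--Leffler series $\sum_{k}y_n^k/(k\Gamma(\alpha k))$ and invoke $E_\alpha(y)\sim\alpha^{-1}e^{y^{1/\alpha}}$, whereas the paper brackets $i\alpha$ by its integer part (the $H^{\pm}_{n,l}$ step) and truncates the sum at $\gamma_n=\lfloor\beta_n^{1/(1-\alpha)}\rfloor$, showing the tail $D^{2}_{n,l}$ is negligible by a Poisson-tail bound and that the factors $(1+(1-il)/n)^{\lfloor i\alpha\rfloor}$ are harmless for $i\le\gamma_n$; this truncation is precisely the verification you flag as the ``main obstacle,'' so your plan is sound and the details you defer are exactly those the paper supplies.

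One caveat: you assert, without computing it, that the Fenchel--Legendre transform of the limit $\Lambda_l(\lambda)=(\alpha A_l\lambda)^{1/\alpha}$ coincides with the displayed $I_{\alpha,l}$. Carrying out the transform gives $\frac{1-\alpha}{\alpha}\bigl(l!/(1-\alpha)_{(l-1)\uparrow 1}\bigr)^{1/(1-\alpha)}x^{1/(1-\alpha)}$, whereas the displayed constant $(1-\alpha)\bigl(l!/(1-\alpha)_{(l-1)\uparrow 1}\bigr)^{\alpha/(1-\alpha)}$ is the transform of $\alpha A_l\,\lambda^{1/\alpha}$ rather than of $(\alpha A_l\lambda)^{1/\alpha}$; the two agree only when $\alpha A_l=\alpha\,$ is replaced by $1$, e.g.\ they already differ for $l=1$. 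The paper's own proof makes the same silent leap from its limit \rf{tail4}--\rf{tail5} to the stated rate function, so this is a discrepancy between the theorem's displayed constant and the cumulant limit that both you and the paper derive, not a defect peculiar to your argument --- but in your write-up this last step should be computed explicitly rather than asserted.
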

\proof Let $y_n$ be as in Theorem~\ref{t1}. Set 
$$y_{n,l}=\frac{\alpha (1-\alpha)_{(l-1)\uparrow 1}}{l!}\frac{y_n}{1-y_n}.
$$

By an argument similar to the proof of Lemma~2.1 in \cite{FaFe14}, we obtain that for any $\la>0$ 
\beq 
\mathbb{E}\bigg[ \exp\{\la n^{-\alpha}\beta_n^{\alpha/(1-\alpha)}M_{l,n}\}\bigg]&=&\mathbb{E}\bigg[\bigg(\frac{1}{1-y_n}\bigg)^{M_{l,n}}\bigg]\\
&=&\sum_{i=0}^{\lfloor{n/l\rfloor}}y_{n,l}^{i} \frac{n}{n-il+\alpha i}{n-il+i\alpha \choose n-il}.
\eeq
Note that, since $1\leq \frac{n}{n-il+\alpha i}\leq \frac{l}{\alpha}$ for $i=0,\ldots, \lfloor{n/l\rfloor}$, it follows that the large $n$ approximation of 
$$\mathbb{E}\bigg[ \exp\{\la n^{-\alpha}\beta_n^{\alpha/(1-\alpha)}M_{l,n}\}\bigg]$$ 
is equivalent to that of 

\[
H_{n,l}=\sum_{i=0}^{\lfloor{n/l\rfloor}}y_{n,l}^{i} {n-il+i\alpha \choose n-il}.
\]
Set
\[
H^-_{n,l}=\sum_{i=0}^{\lfloor{n/l\rfloor}}y_{n,l}^{i} {n-il+\lfloor i\alpha\rfloor \choose n-il}\]
and 
\[
H^+_{n,l}=\sum_{i=0}^{\lfloor{n/l\rfloor}}y_{n,l}^{i} {n-il+\lfloor i\alpha\rfloor +1 \choose n-il}.
\]
It is clear that
\[
H^-_{n,l}\leq H_{n,l}\leq H^+_{n,l}\leq (n+1) H^-_{n,l}.
\]
The assumption for $\beta_n$ guarantees that the factor $n+1$ in the upper bound does not contribute to the scaled logarithmic limit. Accordingly, we can write 
\be\label{mdp1}
\lim_{n\ra \infty}\frac{1}{\beta_n^{1/(1-\alpha)}}\ln \mathbb{E}\bigg[ \exp\{\la n^{-\alpha}\beta_n^{\alpha/(1-\alpha)}M_{l,n}\}\bigg]=\lim_{n\ra \infty}\frac{1}{\beta_n^{1/(1-\alpha)}}\ln H^-_{n,l}.
\ee
To estimate $H^-_{n,l}$, we write
\beq
H^-_{n,l}&=& \sum_{i=0}^{\lfloor n/l\rfloor}(y^{1/\alpha}_{n,l})^{ i\alpha} \frac{(n-il +1)\cdots (n-il+\lfloor i\alpha\rfloor)}{(\lfloor i\alpha\rfloor)!}\\
&=& \sum_{i=0}^{\lfloor n/l\rfloor}(y^{1/\alpha}_{n,l})^{ i\alpha-\lfloor i\alpha \rfloor} (ny^{1/\alpha}_{n,l})^{\lfloor i\alpha\rfloor} \frac{(1+(1-il)/n)\cdots (1+(\lfloor i\alpha\rfloor-il)/n )}{(\lfloor i\alpha\rfloor)!}\eeq
which is controlled from below by 
\[
\sum_{i=0}^{\lfloor n/l\rfloor}(y^{1/\alpha}_{n,l})^{ i\alpha-\lfloor i\alpha \rfloor} (ny^{1/\alpha}_{n,l})^{\lfloor i\alpha\rfloor} \frac{(1+(1-il)/n)^{\lfloor i\alpha \rfloor}}{(\lfloor i\alpha\rfloor)!}\]
and from above by
\[
\sum_{i=0}^{\lfloor n/l\rfloor}(y^{1/\alpha}_{n,l})^{ i\alpha-\lfloor i\alpha \rfloor} (ny^{1/\alpha}_{n,l})^{\lfloor i\alpha\rfloor} \frac{(1+(\lfloor i\alpha\rfloor-il)/n )^{\lfloor i\alpha \rfloor}}{(\lfloor i\alpha\rfloor)!}.\]
Since $(y^{1/\alpha}_{n,l})^{ i\alpha-\lfloor i\alpha \rfloor} $ does not affect the scaled logarithmic limit in \rf{mdp1}, it suffices to focus on
\[
D_{n,l}=\sum_{i=0}^{\lfloor n/l\rfloor} (ny^{1/\alpha}_{n,l})^{\lfloor i\alpha\rfloor} \frac{(1+(1-il)/n)^{\lfloor i\alpha \rfloor}}{(\lfloor i\alpha\rfloor)!}\]
and 
\[
J_{n,l}=\sum_{i=0}^{\lfloor n/l\rfloor}(ny^{1/\alpha}_{n,l})^{\lfloor i\alpha\rfloor} \frac{(1+(\lfloor i\alpha\rfloor-il)/n )^{\lfloor i\alpha \rfloor}}{(\lfloor i\alpha\rfloor)!}
\]

Set $\gamma_n =\lfloor \beta_n^{1/(1-\alpha)}\rfloor$ and write
\[
D_{n,l}= D^1_{n,l} +D^2_{n,l}
\]
with
\[
D^1_{n,l}=\sum_{i=0}^{\gamma_n} (ny^{1/\alpha}_{n,l})^{\lfloor i\alpha\rfloor} \frac{(1+(1-il)/n)^{\lfloor i\alpha \rfloor}}{(\lfloor i\alpha\rfloor)!}.
\]
It follows that
\beqn
D^2_{n,l}&=&\sum_{i=\gamma_n+1}^{\lfloor n/l\rfloor} (ny^{1/\alpha}_{n,l})^{\lfloor i\alpha \rfloor} \frac{(1+(1-il)/n)^{\lfloor i\alpha \rfloor}}{(\lfloor i\alpha\rfloor)!}\nn\\
&\leq& \sum_{i=\gamma_n+1}^{\lfloor n/l\rfloor} \frac{(ny^{1/\alpha}_{n,l})^{\lfloor i\alpha \rfloor}}{(\lfloor i\alpha\rfloor)!}
\leq \frac{1}{\alpha}\sum_{k=\lfloor (\gamma_n+1)\alpha\rfloor}^{\infty} \frac{(ny^{1/\alpha}_{n,l})^{k}}{k!}\label{tail1}\\
&\leq & \frac{1}{\alpha}\frac{(ny_{n,l}^{1/\alpha})^{\lfloor (\gamma_n+1)\alpha\rfloor}}{\lfloor (\gamma_n+1)\alpha\rfloor!}\exp\{ny_{n,l}^{1/\alpha}\}.\nn
\eeqn
By direct calculation, we have 
\be\label{tail 2}
\lim_{n\ra \infty}\frac{ny_{n,l}^{1/\alpha}}{\beta_n^{1/(1-\alpha)}}=\bigg(\frac{\alpha (1-\alpha)_{(l-1)\uparrow 1}}{l!}\la\bigg)^{1/\alpha}
\ee
and 
\be\label{tail3}
\lim_{n \ra \infty}\frac{1}{\beta_n^{1/(1-\alpha)}}\ln \lfloor (\gamma_n+1)\alpha\rfloor!=\infty.\ee
Hence
\[
\lim_{n \ra \infty}\frac{1}{\beta_n^{1/(1-\alpha)}}\ln D^2_{n,l}=-\infty.
\]
This implies that
\[
\lim_{n \ra \infty}\frac{1}{\beta_n^{1/(1-\alpha)}}\ln D_{n,l}=\lim_{n \ra \infty}\frac{1}{\beta_n^{1/(1-\alpha)}}\ln D^1_{n,l}.\]

Noting that $\lim_{n\ra \infty}\max_{10\leq i
\leq \gamma_n}\{|(1-il)/n|\}=0$, we obtain

\[
\lim_{n \ra \infty}\frac{1}{\beta_n^{1/(1-\alpha)}}\ln D^1_{n,l}=\lim_{n \ra \infty}\frac{1}{\beta_n^{1/(1-\alpha)}}\ln \sum_{i=0}^{\gamma_n} \frac{(ny^{1/\alpha}_{n,l})^{\lfloor i\alpha\rfloor}}{(\lfloor i\alpha\rfloor)!}.\]

By an argument similar to that used in deriving the estimation \rf{tail1}, and taking into account of \rf{tail 2}, we obtain that
\begin{align}\label{tail4}
&\lim_{n \ra \infty}\frac{1}{\beta_n^{1/(1-\alpha)}}\ln D_{n,l}\\
&\notag\quad=\lim_{n \ra \infty}\frac{1}{\beta_n^{1/(1-\alpha)}}\ln \sum_{i=0}^{\gamma_n} \frac{(ny^{1/\alpha}_{n,l})^{\lfloor i\alpha\rfloor}}{(\lfloor i\alpha\rfloor)!}\\
&\notag\quad =\lim_{n \ra \infty}\frac{1}{\beta_n^{1/(1-\alpha)}}\ln\exp\{ny_{n,l}^{1/\alpha}\}\\
&\notag\quad= \bigg(\frac{\alpha (1-\alpha)_{(l-1)\uparrow 1}}{l!}\la\bigg)^{1/\alpha},\nn
\end{align}
Similarly we can prove that
\be\label{tail5}
\lim_{n \ra \infty}\frac{1}{\beta_n^{1/(1-\alpha)}}\ln J_{n,l}= \bigg(\frac{\alpha (1-\alpha)_{(l-1)\uparrow 1}}{l!}\la\bigg)^{1/\alpha}.\ee
The result now follows from \rf{mdp1}, \rf{tail4}, \rf{tail5} and G\"artner-Ellis theorem. 

\hfill  $\Box$

%%%%%%%%%%%%%%%%%%%%%%%%%%%%%%%%
%%%%%%%%%%%%%%%%%%%%%%%%%%%%%%%%
%%%%%%%%%%%%%%%%%%%%%%%%%%%%%%%%
%%%%%%%%%%%%%%%%%%%%%%%%%%%%%%%%

\section{Moderate deviations for $K_m^{(n)}$ and $M_{l,m}^{(n)}$ }

Given $n \geq 1$, let $\mathbf{X}_{n}=(X_{1},\ldots,X_{n})$ be a sample from the population with type proportions following  two parameter Poisson-Dirichlet distribution $PD(\alpha,\theta)$. Let the sample $\mathbf{X}_{n}$ featuring $K_{n}=j\leq n$ distinct types with corresponding frequencies $\mathbf{N}_{n}=(N_{1,1},\ldots,N_{1,K_{n}})=(n_{1},\ldots,n_{j})$, and let $M_{l,n}$ be the number of distinct types with frequency $1\leq l \leq n$. Now 
consider an additional sample $\mathbf{X}^{(n)}_{m}=(X_{n+1},\ldots,X_{n+m})$ of size $m$, and let $K_m^{(n)}$
 and $M_{l,m}^{(n)}$ be the sample diversity and sample diversity with frequency $1\leq l \leq m$
 in $\mathbf{X}^{(n)}_{m}$.  In this section we derive the MDPs for  $K_m^{(n)}$
 and $M_{l,m}^{(n)}$ as $m$ tends to infinity given $\mathbf{X}_{n}$, $K_n$ and  $\mathbf{N}_{n}$. The law of the type proportions of the population is now the posterior distribution of $PD(\alpha,\theta)$ given $\mathbf{X}_{n}$. Structurally we can divide the type into two groups: types appeared in the sample  $\mathbf{X}_{n}$ and brand new types.
   
  Let $L_{m}^{(n)}$ be the number of $X_{n+i}$'s, for $i=1,\ldots,m$, that do not coincide with $X_{i}$'s, for $i=1,\ldots,n$. Also, let
\begin{itemize}
\item[i)] $\tilde{K}_{m}^{(n)}$ be the number of new distinct types in the additional sample $\mathbf{X}_{m}$, i.e. the number of types in $\mathbf{X}^{(n)}_{m}$ which do not coincide with any of the types that appear in the initial sample $\mathbf{X}_{n}$;
\item[ii)] $\tilde{M}_{l,m}^{(n)}$ be the number of new distinct types with frequency $l$ in the additional sample $\mathbf{X}_{m}$, i.e., the number of types with frequency $l$ among the new types that appear in $\mathbf{X}^{(n)}_{m}$, such that
\begin{displaymath}
\sum_{l=1}^{m}\tilde{M}_{l,m}^{(n)}=\tilde{K}_{m}^{(n)}\quad\mbox{ and }\quad\sum_{l=1}^{n}l\tilde{M}_{l,m}^{(n)}=L_{m}^{(n)}.
\end{displaymath}
\end{itemize}

Since the sample $\mathbf{X}_{n}$ is fixed, the moderate deviations for $K_m^{(n)}$ and $M_{l,m}^{(n)}$ are equivalent to the corresponding moderate deviations for $\tilde{K}_m^{(n)}$ and $\tilde{M}_{m,l}^{(n)}$. Thus we will focus on  $\tilde{K}_m^{(n)}$ and $\tilde{M}_{m,l}^{(n)}$ in the sequel. The key step in the proof is the following representation for the conditional, or posterior, distributions of $\tilde{K}_{m}^{(n)}$ given $(K_{n},\mathbf{N}_{n})$ and of $\tilde{M}_{l,m}^{(n)}$ given $(K_{n},\mathbf{N}_{n})$, for any $l=1,\ldots,m$. With a slight abuse of notation, throughout this section we write $X\,|\,Y$ to denote a random variable whose distribution coincides with the conditional distribution of $X$ given $Y$.

\begin{thm}\label{p-mdp1}
For any $k\geq1$ and $p\in[0,1]$, let $Z_{k,p}$ be Binomial random variable with parameter $(k,p)$, and for any $a, b>0$ let $B_{a,b}$ be a Beta random variable with parameter $(a,b)$. If $K^{\ast}_{m}$ and $M^{\ast}_{l,m}$ denote the number of distinct types and the number of distinct types with frequency $1\leq l\leq m$, respectively, in a sample of size $m$ from $PD(\alpha,\theta+n)$, then we have 
\begin{equation}\label{eq_id1}
\tilde{K}_{m}^{(n)}\,|\,(K_{n}=j,\mathbf{N}_{n}=(n_{1},\ldots,n_{j}))\stackrel{\text{d}}{=}\tilde{K}_{m}^{(n)}\,|\,(K_{n}=j)\stackrel{\text{d}}{=}Z_{K^{\ast}_{m},B_{\frac{\theta}{\alpha}+j,\frac{n}{\alpha}-j}}
\end{equation}
and
\begin{equation}\label{eq_id2}
\tilde{M}_{l,m}^{(n)}\,|\,(K_{n}=j,\mathbf{N}_{n}=(n_{1},\ldots,n_{j}))\stackrel{\text{d}}{=}\tilde{M}_{l,m}^{(n)}\,|\,(K_{n}=j)\stackrel{\text{d}}{=}Z_{M^{\ast}_{l,m},B_{\frac{\theta}{\alpha}+j,\frac{n}{\alpha}-j}}
\end{equation}
where $\stackrel{\mbox{d}}{=}$ denotes the equality in distribution, and $B_{\frac{\theta}{\alpha}+j,\frac{n}{\alpha}-j}$ is independent of $K^{\ast}_{m}$ and of $M^{\ast}_{l,m}$.
 \end{thm}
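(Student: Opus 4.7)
My plan is to exploit the product structure of Pitman's exchangeable partition probability function (EPPF) for $PD(\alpha,\theta)$ combined with a Chu-Vandermonde reduction, and then identify the resulting conditional law with the Binomial mixture via Pitman's sampling formula.

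First I would use the EPPF to compute, for any specific partition $\pi$ of $[n]$ with block sizes $(n_{1},\ldots,n_{j})$ and any extension $\pi'$ of $\pi$ by $m$ customers that create $k$ new blocks of sizes $(s_{1},\ldots,s_{k})$ and augment the existing blocks by $(r_{1},\ldots,r_{j})$, the ratio
\[
\frac{\Pe(\pi')}{\Pe(\pi)} = \alpha^{k}\,\frac{(\theta/\alpha + j)_{k\uparrow 1}}{(\theta + n)_{m\uparrow 1}}\,\prod_{c=1}^{j}(n_{c}-\alpha)_{r_{c}\uparrow 1}\,\prod_{c'=1}^{k}(1-\alpha)_{(s_{c'}-1)\uparrow 1}.
\]
Summing over $(r_{1},\ldots,r_{j})$ with $\sum_{c} r_{c}=m-L$ via the Chu-Vandermonde identity $\sum_{r_{1}+\cdots+r_{j}=\rho}\binom{\rho}{r_{1},\ldots,r_{j}}\prod_{c}(n_{c}-\alpha)_{r_{c}\uparrow 1}=(n-j\alpha)_{\rho\uparrow 1}$, the dependence on the individual frequencies $n_{c}$ collapses into a function of $(n,j)$ only. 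Since $\tilde{K}_{m}^{(n)}$ and $\tilde{M}_{l,m}^{(n)}$ are obtained by further summation over the new block size multiset, this already establishes the first equality in (\ref{eq_id1}) and (\ref{eq_id2}).

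Next I would match the resulting conditional law with the Binomial mixture. After the Chu-Vandermonde reduction and summation over the new block sizes, the conditional joint law of $(\tilde{K}_{m}^{(n)},L_{m}^{(n)})$ given $K_{n}=j$ takes the form
\[
\Pe(\tilde{K}_{m}^{(n)}=k,\,L_{m}^{(n)}=L\mid K_{n}=j) = \alpha^{k}\,\frac{(\theta/\alpha+j)_{k\uparrow 1}}{(\theta+n)_{m\uparrow 1}}\,\binom{m}{L}\,(n-j\alpha)_{(m-L)\uparrow 1}\,k!\,\mathscr{C}(L,k;\alpha),
\]
where $\mathscr{C}(\cdot,\cdot;\alpha)$ denotes the generalized factorial coefficient. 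On the other side, the Beta moment $\E[B^{k}(1-B)^{h-k}] = (\theta/\alpha+j)_{k\uparrow 1}(n/\alpha-j)_{(h-k)\uparrow 1}/((\theta+n)/\alpha)_{h\uparrow 1}$, combined with Pitman's sampling formula for $K_{m}^{\ast}$ under $PD(\alpha,\theta+n)$, produces the identical prefactor $\alpha^{k}(\theta/\alpha+j)_{k\uparrow 1}/(\theta+n)_{m\uparrow 1}$, reducing the verification of (\ref{eq_id1}) to a convolution identity for $\mathscr{C}(\cdot,\cdot;\alpha)$ available in Pitman's monograph. The identity (\ref{eq_id2}) for $\tilde{M}_{l,m}^{(n)}$ follows by the same template, now restricting the summation over the new block size multiset to configurations with a prescribed number of parts equal to $l$, producing a refined version of the generalized factorial coefficient; the Beta-Binomial matching is then structurally unchanged.

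The principal obstacle is the algebraic matching in the second step: the two Pochhammer families $(n-j\alpha)_{\cdot\uparrow 1}$ arising from the Chu-Vandermonde reduction and $(n/\alpha-j)_{\cdot\uparrow 1}$ arising from the Beta moment have different bases (differing by a factor of $\alpha$), so reconciling them requires a careful application of the convolution identities for $\mathscr{C}(\cdot,\cdot;\alpha)$ relating sample sizes $L\leq m$ to the full sample size $m$.
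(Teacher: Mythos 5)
Your route is genuinely different from the paper's. The paper proves the theorem by matching moments: since all the variables are bounded, equality of all moments suffices, and the conditional moments $\E[(\tilde{K}_{m}^{(n)})^{r}\,|\,K_{n}=j]$ and $\E[(\tilde{M}_{l,m}^{(n)})^{r}\,|\,K_{n}=j]$ (taken from Proposition 1 of \cite{FLMP09} and Theorem 2 of \cite{Fav(13)}) are rewritten, via non-central Stirling numbers of the second kind and an Euler Beta integral, into $\sum_{t}S(r,t)\E[(K^{\ast}_{m})_{t\downarrow1}]\E[(B_{\frac{\theta}{\alpha}+j,\frac{n}{\alpha}-j})^{t}]$, i.e.\ the $r$-th moment of the mixed binomial. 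You instead work at the level of the laws: your EPPF ratio is correct, the multivariate Chu--Vandermonde collapse of $\prod_{c}(n_{c}-\alpha)_{r_{c}\uparrow1}$ into $(n-j\alpha)_{(m-L)\uparrow1}$ is exactly right, and it gives the first equality in \rf{eq_id1} and \rf{eq_id2} directly and self-containedly (in the paper this equality enters only implicitly, through the cited moment formulas, which depend on $\mathbf{N}_{n}$ only through $(n,j)$); you also avoid the moment-determinacy step altogether. For \rf{eq_id1} your remaining matching is indeed available: summing your joint law over $L$ gives, writing $S_{\alpha}(L,k)$ for the sum of $\prod_{c'}(1-\alpha)_{(s_{c'}-1)\uparrow1}$ over partitions of $L$ elements into $k$ blocks, the expression $\frac{\alpha^{k}(\theta/\alpha+j)_{k\uparrow1}}{(\theta+n)_{m\uparrow1}}\sum_{L}\binom{m}{L}(n-j\alpha)_{(m-L)\uparrow1}S_{\alpha}(L,k)$, while the mixed binomial gives $\frac{(\theta/\alpha+j)_{k\uparrow1}}{(\theta+n)_{m\uparrow1}}\sum_{h}\binom{h}{k}\left(\frac{n}{\alpha}-j\right)_{(h-k)\uparrow1}\alpha^{h}S_{\alpha}(m,h)$; these are precisely the two standard expansions of the same non-central generalized factorial coefficient (Charalambides \cite{Cha(05)}; Pitman \cite{Pit(06)}), so the base mismatch $n-j\alpha$ versus $n/\alpha-j$ that you flag as the obstacle is resolved exactly there. (A minor convention slip: with Charalambides' normalization, keeping both the prefactor $\alpha^{k}$ and the factor $k!\,\Ccr(L,k;\alpha)$ in your joint law double counts; this does not affect the argument.)

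The genuine thin spot is \rf{eq_id2}. The claim that the Beta--Binomial matching is ``structurally unchanged'' is not justified: unlike $K^{\ast}_{m}$, the law of $M^{\ast}_{l,m}$ and the conditional law of $\tilde{M}_{l,m}^{(n)}$ are alternating (inclusion--exclusion) sums, because you must also sum out the sizes of the new blocks that are \emph{not} equal to $l$; the ``refined generalized factorial coefficients'' this produces do not come with an off-the-shelf convolution identity in Pitman's monograph, so as written this step asserts rather than proves the key identity, and executing it at the level of probability mass functions would require deriving a new combinatorial identity. The clean way to close this inside your framework is to pass to factorial moments of $\tilde{M}_{l,m}^{(n)}$ computed from your EPPF representation and compare them with $\sum_{t}S(r,t)\E[(M^{\ast}_{l,m})_{t\downarrow1}]\E[(B_{\frac{\theta}{\alpha}+j,\frac{n}{\alpha}-j})^{t}]$ using the product formula \rf{eq_momm} --- but that is essentially the paper's proof, and it is precisely why the paper works with moments for both statements. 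So: your plan is sound and complete in outline for \rf{eq_id1}, but for \rf{eq_id2} it currently leaves the decisive matching unproved.
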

\begin{proof} Since all random variables involved are bounded, it suffices to verify the equality of all moments. We start by recalling some moment formulate for $K^{\ast}_{m}$ and $M^{\ast}_{l,m}$ (cf. \cite{Yam(00)} and \cite{Fav(13)}). In particular one has
\begin{equation}\label{eq_momk}
\E[(K^{\ast}_{m})_{r\downarrow 1}]=\left(\frac{\theta+n}{\alpha}\right)_{r\uparrow1}\sum_{i=0}^{r}(-1)^{r-i}{r\choose i}\frac{(\theta+n+i\alpha)_{m\uparrow1}}{(\theta+n)_{m\uparrow1}}
\end{equation}
and
\begin{align}\label{eq_momm}
&\E[(M^{\ast}_{l,m})_{r\downarrow 1}]\\
&\notag\quad=(m)_{rl\downarrow 1}\left(\frac{\alpha(1-\alpha)_{(l-1)\uparrow1}}{l!}\right)^{r}\left(\frac{\theta+n}{\alpha}\right)_{r\uparrow1}\frac{(\theta+n+r\alpha)_{(m-rl)\uparrow1}}{(\theta+n)_{m\uparrow1}},
\end{align}
where $(c)_{j\downarrow 1}=(c)_{j\uparrow -1}$
Moreover, let us recall the factorial moment of order $r$ of the Binomial random variable $Z_{n,p}$, i.e., 
\begin{equation}\label{eq_momb}
\E[(Z_{n,p})^{r}]=\sum_{t=0}^{r}S(r,t)(n)_{t\downarrow 1}p^{t},
\end{equation}
with $S(n,k)$ being the Stirling number of the second kind. If $S(n,k;a)$ denotes the non-central Stirling number of the second kind, see \cite{Cha(05)}, then by means of Proposition 1 in \cite{FLMP09} we have
\beq
&&\E[(\tilde{K}_{m}^{(n)})^{r}\,|\,K_{n}=j]\\
&&\quad=\sum_{i=0}^{r}(-1)^{r-i}\left(j+\frac{\theta}{\alpha}\right)_{i\uparrow1}S\left(r,i;j+\frac{\theta}{\alpha}\right)\frac{(\theta+n+i\alpha)_{m\uparrow1}}{(\theta+n)_{m\uparrow1}}\\
&&\mbox{(by expanding $S(r,i;j+\theta/\alpha)$ as a finite sum)}\\
&&\quad=\sum_{i=0}^{r}(-1)^{-i}\frac{(\theta+n+i\alpha)_{m\uparrow1}}{(\theta+n)_{m\uparrow1}}\sum_{t=i}^{r}(-1)^{t}{t\choose i}S(r,t)\left(j+\frac{\theta}{\alpha}\right)_{t\uparrow1}\\
&&\quad=\sum_{t=0}^{r}S(r,t)\frac{\left(j+\frac{\theta}{\alpha}\right)_{t\uparrow1}}{\left(\frac{\theta+n}{\alpha}\right)_{t\uparrow1}}\left(\frac{\theta+n}{\alpha}\right)_{t\uparrow1}\sum_{i=0}^{t}(-1)^{t-i}{t\choose i}\frac{(\theta+n+i\alpha)_{m\uparrow1}}{(\theta+n)_{m\uparrow1}}\\
&&\mbox{(by Equation \rf{eq_momk})}\\
&&\quad=\sum_{t=0}^{r}S(r,t)\frac{\left(j+\frac{\theta}{\alpha}\right)_{t\uparrow1}}{\left(\frac{\theta+n}{\alpha}\right)_{t\uparrow1}}\E[(K^{\ast}_{m})_{t\downarrow 1}]\\
&&\mbox{(by expanding $(j+\theta/\alpha)_{t\uparrow1}/((\theta+n)/\alpha)_{t\uparrow1}$ as an Euler integral)}\\
&&\quad=\sum_{t=0}^{r}S(r,t)\E[(K^{\ast}_{m})_{t\downarrow 1}]\frac{\Gamma\left(\frac{\theta+n}{\alpha}\right)}{\Gamma\left(\frac{\theta}{\alpha}+j\right)\Gamma\left(\frac{n}{\alpha}-j\right)}\int_{0}^{1}x^{t+\frac{\theta}{\alpha}+j-1}(1-x)^{\frac{n}{\alpha}-j-1}d\, x\\
&&\quad=\sum_{t=0}^{r}S(r,t)\E[(K^{\ast}_{m})_{t\downarrow 1}]\E[(B_{\frac{\theta}{\alpha}+j,\frac{n}{\alpha}-j})^{t}]\\
&&\quad=\E\left[\E\left[\sum_{t=0}^{r}S(r,t)(K^{\ast}_{m})_{t\downarrow 1}(B_{\frac{\theta}{\alpha}+j,\frac{n}{\alpha}-j})^{t}\right]\right]\\
&&\mbox{(by Equation \rf{eq_momb})}\\
&&\quad=\E\left[\left(Z_{K^{\ast}_{m},B_{\frac{\theta}{\alpha}+j,\frac{n}{\alpha}-j}}\right)^{r}\right]
\eeq
and the proof of the representation \rf{eq_id1} is completed. Similarly, by Theorem 2 in \cite{Fav(13)} we can write
\beq
&&\E[(\tilde{M}_{l,m}^{(n)})^{r}\,|\,K_{n}=j]\\
&&\quad=\sum_{t=0}^{r}S(r,t)(m)_{tl\downarrow 1}\left(\frac{\alpha(1-\alpha)_{(l-1)\uparrow1}}{l!}\right)^{t}\left(j+\frac{\theta}{\alpha}\right)_{t\uparrow1}\frac{(\theta+n+t\alpha)_{(m-tl)\uparrow1}}{(\theta+n)_{m\uparrow1}}\\
&&\mbox{(by Equation \rf{eq_momm})}\\
&&\quad=\sum_{t=0}^{r}S(r,t)\frac{\left(j+\frac{\theta}{\alpha}\right)_{t\uparrow1}}{\left(\frac{\theta+n}{\alpha}\right)_{t\uparrow1}}\E[(M^{\ast}_{l,m})_{t\downarrow 1}]\\
&&\mbox{(by expanding $(j+\theta/\alpha)_{t\uparrow1}/((\theta+n)/\alpha)_{t\uparrow1}$ as an Euler integral)}\\
&&\quad=\sum_{t=0}^{r}S(r,t)\E[(M^{\ast}_{l,m})_{t\downarrow 1}]\frac{\Gamma\left(\frac{\theta+n}{\alpha}\right)}{\Gamma\left(\frac{\theta}{\alpha}+j\right)\Gamma\left(\frac{n}{\alpha}-j\right)}\int_{0}^{1}x^{t+\frac{\theta}{\alpha}+j-1}(1-x)^{\frac{n}{\alpha}-j-1}d\,x\\
&&\quad=\sum_{t=0}^{r}S(r,t)\E[(M^{\ast}_{l,m})_{t\downarrow 1}]\E[(B_{\frac{\theta}{\alpha}+j,\frac{n}{\alpha}-j})^{t}]\\
&&\quad=\E\left[\E\left[\sum_{t=0}^{r}S(r,t)(M^{\ast}_{l,m})_{t\downarrow 1}(B_{\frac{\theta}{\alpha}+j,\frac{n}{\alpha}-j})^{t}\right]\right]\\
&&\mbox{(by Equation \rf{eq_momb})}\\
&&\quad=\E\left[\left(Z_{M^{\ast}_{l,m},B_{\frac{\theta}{\alpha}+j,\frac{n}{\alpha}-j}}\right)^{r}\right]
\eeq
and the proof of the representation \rf{eq_id2} is completed. 

\hfill $\Box$ 
\end{proof}

Now are ready to prove the main result of this section.

\begin{thm}\label{last}
For any $\alpha\in(0,1)$ and $\theta>-\alpha$, the conditional laws of $\frac{\tilde{K}^{(n)}_m}{m^{\alpha}\beta_m}$ and $\frac{\tilde{M}^{(n)}_{m,l}}{m^{\alpha}\beta_m}$ satisfy MDPs that are the same as  $\frac{K_m}{m^{\alpha}\beta_m}$ and $\frac{M_{l,m}}{m^{\alpha}\beta_m}$, respectively, as $m$ tends to infinity.
\end{thm}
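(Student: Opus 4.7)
The plan is to combine the Binomial-mixture representation from Theorem~\ref{p-mdp1} with the MDPs of Theorems~\ref{t1} and~\ref{t2}. After conditioning on $K_n=j$ and $\mathbf{N}_n$, Theorem~\ref{p-mdp1} gives
\[
\tilde{K}_m^{(n)}\stackrel{d}{=}Z_{K_m^*,B},\qquad \tilde{M}_{l,m}^{(n)}\stackrel{d}{=}Z_{M_{l,m}^*,B},
\]
where $K_m^*$ and $M_{l,m}^*$ are the sample diversities under $PD(\alpha,\theta+n)$ and $B=B_{\theta/\alpha+j,\,n/\alpha-j}$ is an independent Beta random variable. Since $\theta+n>-\alpha$, Theorems~\ref{t1} and~\ref{t2} apply verbatim to $K_m^*$ and $M_{l,m}^*$ and deliver the same rate functions $I_\alpha$ and $I_{\alpha,l}$ as in the unconditional case (these do not depend on $\theta$). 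The task thus reduces to showing that the Binomial thinning by $B$ does not alter the MDP, which I would verify via G\"artner-Ellis.

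With $\tau_m=\lambda m^{-\alpha}\beta_m^{\alpha/(1-\alpha)}$, conditioning first on $(K_m^*,B)$ yields
\[
\E\bigl[\exp(\tau_m Z_{K_m^*,B})\bigr]=\E\bigl[(1+B(e^{\tau_m}-1))^{K_m^*}\bigr],
\]
and similarly with $M_{l,m}^*$ in place of $K_m^*$. For $\lambda>0$, using $B\le 1$ and $(1+x)^k\le e^{xk}$ dominates the right-hand side by $\E[\exp((e^{\tau_m}-1)K_m^*)]$; since $(e^{\tau_m}-1)m^\alpha\beta_m^{-\alpha/(1-\alpha)}\to\lambda$, Theorem~\ref{t1} gives $\limsup\beta_m^{-1/(1-\alpha)}\ln\E[\cdots]\le\lambda^{1/\alpha}$. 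For the matching lower bound, fix $\epsilon\in(0,1)$ and exploit the monotonicity of $b\mapsto 1+b(e^{\tau_m}-1)$ together with the independence of $B$ and $K_m^*$:
\[
\E\bigl[(1+B(e^{\tau_m}-1))^{K_m^*}\bigr]\ge \Pe(B\ge 1-\epsilon)\,\E\bigl[\exp(\tau'_m K_m^*)\bigr],
\]
with $\tau'_m=\ln(1+(1-\epsilon)(e^{\tau_m}-1))=(1-\epsilon)\tau_m(1+o(1))$. Theorem~\ref{t1} then produces $\liminf\ge((1-\epsilon)\lambda)^{1/\alpha}$, and $\epsilon\downarrow 0$ closes the gap; $\Pe(B\ge 1-\epsilon)>0$ because $j\ge 1$ and $\alpha<1$ keep both Beta parameters strictly positive. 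For $\lambda\le 0$, the upper bound $\E[\exp(\tau_m Z)]\le 1$ is immediate, while the lower bound $1+B(e^{\tau_m}-1)\ge e^{\tau_m}$ yields $\E[\cdots]\ge e^{\epsilon m^\alpha\tau_m}\Pe(K_m^*\le\epsilon m^\alpha)$; since $\epsilon m^\alpha\tau_m=\epsilon\lambda\beta_m^{\alpha/(1-\alpha)}=o(\beta_m^{1/(1-\alpha)})$ and $\Pe(K_m^*\le\epsilon m^\alpha)\to\Pe(S_{\alpha,\theta+n}\le\epsilon)>0$, one gets $\liminf\ge 0$.

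Putting everything together, the scaled log-MGF of $Z_{K_m^*,B}/(m^\alpha\beta_m)$ converges to $\lambda^{1/\alpha}$ for $\lambda>0$ and to $0$ for $\lambda\le 0$---exactly as for $K_m/(m^\alpha\beta_m)$---so G\"artner-Ellis produces the MDP with rate $I_\alpha$. The argument for $\tilde{M}_{l,m}^{(n)}$ is identical after replacing $K_m^*$ by $M_{l,m}^*$ and invoking Theorem~\ref{t2}; its log-MGF limit $(c_l\lambda)^{1/\alpha}$ with $c_l=\alpha(1-\alpha)_{(l-1)\uparrow 1}/l!$ gives the rate $I_{\alpha,l}$. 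The principal technical obstacle is the uniform control of the perturbations $\tau_m\mapsto e^{\tau_m}-1$ and $\tau_m\mapsto\tau'_m$: both differ from $\tau_m$ only by multiplicative factors converging to constants (respectively $1$ and $1-\epsilon$), and the continuity of $\lambda\mapsto\lambda^{1/\alpha}$ on $\R_+$ together with the fact that $\Pe(B\ge 1-\epsilon)$ contributes only an $O(1)$ log-correction ensures the pieces fit together at speed $\beta_m^{1/(1-\alpha)}$. The G\"artner-Ellis differentiability condition at $\lambda=0$ follows from $1/\alpha>1$.
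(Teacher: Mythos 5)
Your proposal is correct and follows essentially the same route as the paper: the Binomial-mixture representation of Theorem~\ref{p-mdp1}, comparison with $K^{\ast}_m$ and $M^{\ast}_{l,m}$ under $PD(\alpha,\theta+n)$ (whose Laplace asymptotics, established in the proofs of Theorems~\ref{t1} and~\ref{t2}, do not depend on $\theta$), and the G\"artner--Ellis theorem. The only divergence is a technical detail in the lower bound for $\lambda>0$: you restrict to the event $\{B\ge 1-\epsilon\}$ and let $\epsilon\downarrow 0$, while the paper bounds $(1-\eta+\eta e^{\tau})^{K^{\ast}_m}\ge \eta^{K^{\ast}_m}e^{\tau K^{\ast}_m}$ and shows $\E\big[\eta^{K^{\ast}_m}\big]$ is only polynomially small in $m$ via a Gamma-function ratio; both variants are valid.
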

 \proof  First observe that the MDPs for  $\frac{K^{\ast}_m}{m^{\alpha}\beta_m}$ and $\frac{M^{\ast}_{m,l}}{m^{\alpha}\beta_m}$ are the same as the corresponding MDPs for $\frac{K_m}{m^{\alpha}\beta_m}$ and $\frac{M_{l,m}}{m^{\alpha}\beta_m}$, respectively. Furthermore, for any $\la \leq 0$ it is not difficult to see that
\begin{align*}
& \lim_{m\ra \infty}\frac{1}{\beta^{1/(1-\alpha)}_m}\ln\mathbb{E}[e^{\la m^{-\alpha} \beta^{\alpha/(1-\alpha)}_m \tilde{K}^{(n)}_m}|K_n=j]\\
&\quad=\lim_{m\ra \infty}\frac{1}{\beta^{1/(1-\alpha)}_m}\ln\mathbb{E}[e^{\la m^{-\alpha} \beta^{\alpha/(1-\alpha)}_m \tilde{M}^{(n)}_{m,l}}|K_n=j]\\
&\quad=0.
\end{align*}
 
Let  $\{Y_i:i\geq 1\}$ be iid Bernoulli with parameter $\eta=B_{\frac{\theta}{\alpha}+j,\frac{n}{\alpha}-j}$. 
 it follows from Theorem~\ref{p-mdp1} that
 \[
 \tilde{K}^{(n)}_{m}\stackrel{\text{d}}{=} \sum_{i=1}^{K_m^{\ast}}Y_i, \ \ 
\tilde{M}_{m.l}^{(n)}  \stackrel{\text{d}}{=}\sum_{i=1}^{M_{l,m}^{\ast}}Y_i.
    \]
 Hence for $\la >0$, 
 \beq
 \mathbb{E}[e^{\la m^{-\alpha}\beta_m^{\alpha/(1-\alpha)}\tilde{K}^{(n)}_{m}}|K_n=j]&\leq&\mathbb{E}[e^{\la m^{-\alpha}\beta_m^{\alpha/(1-\alpha)}K^{\ast}_{m}}]
 \eeq
 and
 \begin{align*}
 &\mathbb{E}[e^{\la m^{-\alpha}\beta_m^{\alpha/(1-\alpha)}\tilde{K}^{(n)}_{m} }|K_n=j]\\ 
 &\quad\mathbb{E}\bigg[\mathbb{E}[\bigg(1-\eta+\eta e^{\la m^{-\alpha}\beta_m^{\alpha/(1-\alpha)}}\bigg)^{K_m^{\ast}}]\bigg]\\ 
 &\quad\geq\mathbb{E}\bigg[e^{\la m^{-\alpha}\beta_m^{\alpha/(1-\alpha)}K^\ast_m}\mathbb{E}[\eta^{K_m^{\ast}}]\bigg]\\
 &\quad\geq  \mathbb{E}\bigg[e^{\la m^{-\alpha}\beta_m^{\alpha/(1-\alpha)}K^\ast_m}\frac{\Gamma(\frac{\theta+n}{\alpha})}{\Gamma(\frac{\theta}{\alpha})}\frac{\Gamma(K_m^{\ast}+\frac{\theta}{\alpha})}{\Gamma(K_m^{\ast}+\frac{\theta+n}{\alpha})}\bigg]\\
 &\quad\geq  \frac{1}{m^{\ga(m,\alpha,\theta,n,j)}}\mathbb{E}[e^{\la m^{-\alpha}\beta_m^{\alpha/(1-\alpha)}K^\ast_m} ]
  \end{align*}
 where $\ga(m,\alpha,\theta,n,j)$ is sequence of positive numbers converging to $\frac{n}{\alpha}-j$ for large $m$.  Thus we have
 \be\label{p-mdp2}
\lim_{m\ra \infty}\frac{1}{\beta^{1/(1-\alpha)}_m}\ln \mathbb{E}[e^{\la m^{-\alpha}\beta_m^{\alpha/(1-\alpha)}\tilde{K}^{(n)}_{m}}|K_n=j]=\la^{1/\alpha}.
 \ee
  Similarly we can show that
  \[
\lim_{m\ra \infty}\frac{1}{\beta^{1/(1-\alpha)}_m}\ln \mathbb{E}[e^{\la m^{-\alpha}\beta_m^{\alpha/(1-\alpha)}\tilde{M}^{(n)}_{m}}|K_n=j]=\bigg(\frac{\alpha(1-\alpha)_{(l-1)\uparrow 1} }{l!}\la\bigg)^{1/\alpha}
 \]    
 which combined with \rf{p-mdp2} led to the theorem.
 
  \hfill $\Box$
  
The MDP results in Theorems \ref{t1}, \ref{t2} and \ref{last} identify a critical scale at  $(\ln m)^{1-\alpha}$. It is not clear whether MDP holds when  $\beta_m$ is at or has a slower growth rate than $(\ln m)^{1-\alpha}$. Our calculations indicate that if such MDPs hold true, then the posterior MDP and the unconditional MDP may be different.

\end{document}